\theoremstyle{plain}
\newtheorem{theorem}{Theorem}
\newtheorem{lemma}[theorem]{Lemma}
\newtheorem{definition}[theorem]{Definition}
\newtheorem{corollary}[theorem]{Corollary}
\title{Small-time bilinear control of Schr\"odinger equations\newline with application to rotating linear molecules}
\begin{document}
\author{Thomas Chambrion and Eugenio Pozzoli\footnote{The authors are with Institut de Mathématiques de Bourgogne,
 UMR 5584, CNRS, Université Bourgogne Franche-Comté, F-21000 Dijon, France. (thomas.chambrion@u-bourgogne.fr) (eugenio.pozzoli@u-bourgogne.fr)}}
\maketitle
\abstract{In \cite{duca-nersesyan} Duca and Nersesyan proved a small-time controllability property of nonlinear Schr\"odinger equations on a $d$-dimensional torus $\mathbb{T}^d$.
In this paper we study a similar property, in the linear setting, starting from a closed Riemannian manifold. We then focus on the 2-dimensional sphere $S^2$, which models the bilinear control of a rotating linear top: as a corollary, we obtain the approximate controllability in arbitrarily small times among particular eigenfunctions of the Laplacian of $S^2$.
}\\ 

\textbf{Keywords:} Schr\"odinger equation; infinite-dimensional systems; small-time controllability; linear molecule.
\section{Introduction}
\subsection{The model}
Let $M$ be a smooth manifold equipped with a Riemannian metric $g$. In order to simplify the analysis, we require $M$ to be closed (i.e., boundaryless and compact). In this paper we deal with the controllability properties of the following bilinear Schr\"odinger equation
\begin{equation}\label{eq:schro}
i \frac{\partial}{\partial t}\psi(q,t)=\left(-\Delta_g+V(q)+\sum_{j=1}^m u_j(t)W_j(q)\right)\psi(q,t),
\end{equation}
where we assume that the initial datum $\psi(\cdot,t=0)=\psi_0(\cdot)$ belongs to the Hilbert space $L^2(M,\mathbb{C})$ of complex functions on $M$ that are square integrable w.r.t. the Riemannian volume $\omega_g$: i.e., $\psi\in L^2(M,\mathbb{C})$ if
$$\|\psi\|_{L^2(M,\mathbb{C})}:=\left(\int_{M}|\psi(q)|^2\omega_g(q)\right)^{1/2}<\infty. $$
In \eqref{eq:schro}, $\Delta_g={\rm div}_{\omega_g}\circ \nabla_g$ is the Laplace-Beltrami operator of $(M,g)$ and represents the kinetic energy, where ${\rm div}_{\omega_g}$ and $\nabla_g$ are respectively the divergence w.r.t. the Riemannian volume and the Riemannian gradient. Moreover, $V\in L^\infty(M,\mathbb{R})$ and $W_1,\dots,W_m\in C^\infty(M,\mathbb{R})$ are functions on $M$ (that we identify with multiplicative operators on $L^2(M,\mathbb{C})$) representing respectively a free potential energy and potentials of interaction that can be tuned by means of a time-dependent control law $(u_1(t),\dots,u_m(t))$. 

An example of system that we study in detail in this paper is given by the following Schr\"odinger equation on the two-dimensional sphere $M=S^2:=\{(x,y,z)\in\mathbb{R}^3\mid x^2+y^2+z^2=1\}$:
\begin{equation}\label{eq:S2}
i \frac{\partial}{\partial t}\psi(x,y,z,t)=\left(-\Delta_{S^2}+u_1(t)x+u_2(t)y+u_3(t)z\right)\psi(x,y,z,t),
\end{equation}
$\psi(\cdot,0)=\psi_0(\cdot)\in L^2(S^2,\mathbb{C})$. The expression of the Riemannian volume, the potentials of interaction and the Laplace-Beltrami operator of $S^2$ in spherical coordinates $(\alpha,\beta)\in[0,2\pi)\times [0,\pi]$ are given by
\begin{align*}
\omega_{S^2}&=\sin(\beta)d\alpha d\beta,\\
x&=\cos(\alpha)\sin(\beta),\quad y=\sin(\alpha)\sin(\beta),\quad z=\cos(\beta),\\
\Delta_{S^2}&=\frac{1}{\sin(\beta)}\frac{\partial}{\partial \beta}\left(\sin(\beta)\frac{\partial}{\partial \beta}\right)+\frac{1}{\sin^2(\beta)}\frac{\partial^2}{\partial \alpha^2}.
\end{align*}
System \eqref{eq:S2} is used in molecular physics to model the bilinear control in dipolar approximation of a rotating rigid linear 
molecule in the space by means of three orthogonal electric fields \cite{rabitz} (see Fig.\ref{fig:linear-top}). The capability of controlling molecular rotations has applications in physics ranging from chirality detection \cite{PattersonNature13} to quantum error correction \cite{victor}.

System \eqref{eq:S2} is known to be globally approximately controllable in large times \cite{BCS} (i.e., it is possible to steer any initial state to any neighborhood of any final state having the same norm by choosing suitable controls). Extensions of global approximate controllability to rigid symmetric and asymmetric molecules described by bilinear Schr\"odinger equations on the group of rotations $M=SO(3)$ have been obtained in \cite{Ugo-Mario-Io-symmetrictop,asymm-top}.
\begin{figure}\begin{center}
\includegraphics[width=0.35\linewidth, draft = false]{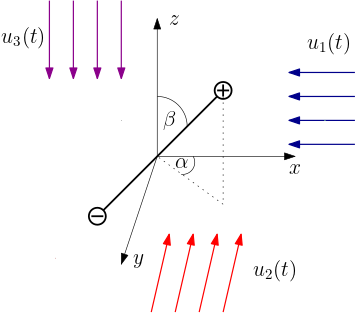}
\caption{Three orthogonal electric fields to control the rotation of a rigid linear molecule in $\mathbb{R}^3.$}\label{fig:linear-top}\end{center}\end{figure}
\subsection{Small-time approximate controllability}
The controllability properties of \eqref{eq:schro} have raised much interest across the mathematical community of partial differential equations in the last two decades (e.g., \cite{mirra,Coron,nerse,BCMS,laurent}), due to the relevance of such questions in physical applications such as spectroscopy and quantum information theory. System \eqref{eq:schro} is generically globally approximately controllable in large times \cite{MS-generic}. Here we focus on controllability properties holding in arbitrarily small times. This is an important subject because quantum systems undergo decoherence and relaxation effects, and the Schr\"odinger equation is an adequate physical model only for small times.

Being $M$ compact, the previously stated hypothesis on the potentials $V,W_1,\dots,W_m$ guarantee that they are bounded self-adjoint multiplicative operators on $L^2(M,\mathbb{R})$. Being $M$ boundaryless, the drift operator $-\Delta_g+V$ is self-adjoint on the domain $H^2(M,\mathbb{C})=\{\psi\in L^2(M,\mathbb{C})\mid \Delta_g \psi \in L^2(M,\mathbb{C}) \text{ weakly} \}$, and given any initial datum $\psi_0\in L^2(M,\mathbb{C})$ and any control $u\in L^1_{\rm loc}(\mathbb{R},\mathbb{R}^m)$ one can then define the propagator $\mathcal{R}_t(\psi_0,u)$ of \eqref{eq:schro} at any time $t\in\mathbb{R}$, which is a solution of \eqref{eq:schro} in the weak sense \cite[Proposition 2.1\&Remark 2.7]{BMS}. 
Moreover, the quantum evolution is unitary, that is, for any $(\psi_0,u)\in L^2(M,\mathbb{C})\times L^1_{\rm loc}(\mathbb{R},\mathbb{R}^m)$ one has 
$$\|\mathcal{R}_t(\psi_0,u)\|_{L^2(M,\mathbb{C})}=\|\psi_0\|_{L^2(M,\mathbb{C})}, \forall t\in\mathbb{R}. $$
Let $\mathcal{S}=\{\psi\in L^2(M,\mathbb{C})\mid \|\psi\|_{L^2(M,\mathbb{C})}=1\}$ be the unit sphere of $L^2(M,\mathbb{C})$. 
\begin{definition}
We say that an element $\psi_1\in \mathcal{S}$ belongs to the \emph{small-time approximately reachable set} from $\psi_0\in \mathcal{S}$, and we write $\psi_1\in \overline{\rm Reach}_{\rm st}(\psi_0)$, if for every $\epsilon>0$ and $\tau>0$ there exist a time $T\in (0,\tau)$ and a control $u\in L^1([0,T],\mathbb{R}^m)$ such that 
$$\|\mathcal{R}_T(\psi_0,u)-\psi_1\|_{L^2(M,\mathbb{C})}<\epsilon.$$
\end{definition}
The characterization of small-time approximately reachable sets for Schr\"odinger partial differential equations is an open challenge. What is known is that for general initial data $\psi_0$ and on a general manifold, $\overline{\rm Reach}_{\rm st}(\psi_0)\neq \mathcal{S}$ \cite{minimal-time-approximate,obstruction-ivan}. Nevertheless, there are examples of conservative bilinear systems for which $\overline{\rm Reach}_{\rm st}(\psi_0)=\mathcal{S}$ for all $\psi_0\in \mathcal{S}$ \cite{minimal-time-thomas}. 

It is well-known that one can follow arbitrarily fast the directions spanned by the potentials of interaction $W_j$, $j=1,\dots,m$: this follows from the limit

$$\lim_{\delta \to 0}\exp\left(-i \delta\left(-\Delta_g+V+\sum_{j=1}^m\frac{u_j}{\delta}W_j\right)\right)\psi_0=\exp\left(-i\sum_{j=1}^m u_jW_j \right)\psi_0, $$
holding in $L^2(M,\mathbb{C})$ for any constant $u_j\in\mathbb{R}$ and $\psi_0\in L^2(M,\mathbb{C})$. This shows that for $\psi\in L^2(M,\mathbb{C})$
$$\left\{e^{i \phi}\psi_0\mid \phi\in{\rm span}\{W_1,\dots,W_m\}\right\}\subset  \overline{\rm Reach}_{\rm st}(\psi_0).$$

In \cite{duca-nersesyan}, Duca and Nersesyan showed that additional directions can be followed arbitrarily fast in \eqref{eq:schro}. They considered a $d$-dimensional torus, that is $M=\mathbb{T}^d:=\mathbb{R}^d/2\pi\mathbb{Z}^d$, with Cartesian coordinates $(x_1,\dots,x_d)$, and proved the following limit 
\begin{align}
\lim_{\delta \to 0}e^{-i\delta^{-1/2}\varphi}\mathcal{R}_\delta\left(e^{i\delta^{-1/2}\varphi}\psi_0,\sum_{j=1}^m\frac{u_j}{\delta}W_j\right)
=\exp\left(-i\sum_{i=1}^d\left(\frac{\partial \varphi}{\partial x_i}\right)^2-i \sum_{j=1}^mu_jW_j\right)\psi_0, \label{limit:alessandro}
\end{align}
holding in $H^s(\mathbb{T}^d,\mathbb{C})$, for any $\psi_0\in H^s(\mathbb{T}^d,\mathbb{C})$, $\varphi\in C^\infty(\mathbb{T}^d,\mathbb{R})$ and $u_j\in\mathbb{R}$ (here $s>s_d$, being $s_d$ the least integer strictly greater than $d/2$).
From \eqref{limit:alessandro}, they developed a saturation technique for multiplicative controls with trigonometric potential of interactions, and found that for $\psi_0\in H^s(\mathbb{T}^d,\mathbb{C})$
$$\{e^{i \phi}\psi_0\mid \phi\in C^\infty(\mathbb{T}^d,\mathbb{R})\}\subset \overline{\rm Reach}_{\rm st}(\psi_0).$$
We remark that this small-time controllability property in \cite{duca-nersesyan} is in fact proved for the harder problem of nonlinear Schr\"odinger equations. As a corollary of this result, they obtained the small-time approximate controllability among eigenstates: denoting by $\Phi:=\{\phi_k(x)=(2\pi)^{-d/2}\exp(i \langle k, x\rangle)$, $k\in \mathbb{Z}^d\}$ the set of eigenfunctions of the Laplacian of $\mathbb{T}^d$, they found that
$$\Phi\subset \overline{\rm Reach}_{\rm st}(\phi_k),\quad \forall k\in\mathbb{Z}^d. $$
Saturation techniques have been introduced by Agrachev and Sarychev \cite{navier-stokes, agrachev2} to study the approximate controllability of 2D Navier-Stokes and Euler systems with additive controls, and extended to the 3D case in \cite{Shirikyan1, Shirikyan2}. Other recent developments of these techniques are given, e.g., in \cite{coron-small-semiclassical} to study small-time controllability properties of semiclassical Schr\"odinger equations, and in \cite{duca-nersesyan2} to study local exact controllability of 1D Schr\"odinger equations with Dirichlet boundary conditions.
\subsection{Main results}
Here, we investigate properties similar to those studied in \cite{duca-nersesyan}, in the linear setting, starting from a general Riemannian manifold. Our first result is the following.
\begin{theorem}\label{thm:limit}
Let $M$ be a smooth closed manifold equipped with a Riemannian metric $g$. Let $V\in L^\infty(M,\mathbb{R})$, $W_j\in C^\infty(M,\mathbb{R}), j=1,\dots,m$. Then, for any $(u_1,\dots,u_m)\in\mathbb{R}^m$, $\psi_0\in L^2(M,\mathbb{C})$ and $\varphi\in C^\infty(M,\mathbb{R})$ the following limit holds in $L^2(M,\mathbb{C})$
\begin{align*}
\lim_{\delta \to 0}&e^{-i\delta^{-1/2}\varphi}\exp\left(-i\delta\left(-\Delta_g+V+\sum_{j=1}^m\frac{u_j}{\delta}W_j\right)\right)e^{i\delta^{-1/2}\varphi}\psi_0\\
=&\exp\left(-i g(\nabla_g \varphi,\nabla_g \varphi)-i \sum_{j=1}^mu_jW_j\right)\psi_0.
\end{align*}
\end{theorem}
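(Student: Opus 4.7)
The strategy is to compute the conjugated Hamiltonian explicitly, identify the claimed limit as the formal principal part of $\delta$ times the conjugated operator, and then close the argument via a Duhamel identity and a density argument. Throughout, set $H^\delta := -\Delta_g + V + \sum_{j=1}^m (u_j/\delta)W_j$ and $\tilde H^\delta := e^{-i\delta^{-1/2}\varphi} H^\delta e^{i\delta^{-1/2}\varphi}$, so that the quantity to be controlled is $e^{-i\delta \tilde H^\delta}\psi_0$. Both $e^{-i\delta H^\delta}$ and its unitary conjugate $e^{-i\delta\tilde H^\delta}$ are unitary on $L^2(M,\mathbb{C})$ by self-adjointness of $H^\delta$ on $H^2(M,\mathbb{C})$, which follows from the hypotheses on $V,W_j$ together with compactness and boundarylessness of $M$.

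First, I would compute the conjugation of $-\Delta_g$ by the unitary multiplication operator $e^{i\delta^{-1/2}\varphi}$. A direct application of the product rules for the Riemannian gradient and divergence gives, on smooth functions,
\begin{equation*}
e^{-i\delta^{-1/2}\varphi}(-\Delta_g)e^{i\delta^{-1/2}\varphi} = -\Delta_g + \delta^{-1}g(\nabla_g\varphi,\nabla_g\varphi) - 2i\delta^{-1/2}g(\nabla_g\varphi,\nabla_g\cdot) - i\delta^{-1/2}(\Delta_g\varphi).
\end{equation*}
Therefore $\delta\tilde H^\delta = G + R^\delta$, where $G := g(\nabla_g\varphi,\nabla_g\varphi) + \sum_{j=1}^m u_j W_j$ is the expected limit generator and
\begin{equation*}
R^\delta := -\delta\Delta_g + \delta V - 2i\delta^{1/2}g(\nabla_g\varphi,\nabla_g\cdot) - i\delta^{1/2}(\Delta_g\varphi)
\end{equation*}
formally vanishes as $\delta\to 0$ when applied to sufficiently smooth data.

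Next, for $\psi_0\in H^2(M,\mathbb{C})$, I would apply a Duhamel identity: differentiating the path $s\mapsto e^{-is\delta\tilde H^\delta} e^{-i(1-s)G}\psi_0$ on $[0,1]$ and integrating produces
\begin{equation*}
e^{-i\delta\tilde H^\delta}\psi_0 - e^{-i G}\psi_0 = -i\int_0^1 e^{-is\delta\tilde H^\delta}\, R^\delta\, e^{-i(1-s)G}\psi_0 \, ds.
\end{equation*}
Since $G\in C^\infty(M,\mathbb{R})$ is bounded (by compactness of $M$ and smoothness of $\varphi$ and the $W_j$), multiplication by $e^{-i(1-s)G}$ preserves $H^2$ with norm bounded uniformly in $s\in[0,1]$. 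Combined with the unitarity of $e^{-is\delta\tilde H^\delta}$ on $L^2$, this yields the estimate
\begin{equation*}
\|e^{-i\delta\tilde H^\delta}\psi_0 - e^{-i G}\psi_0\|_{L^2(M,\mathbb{C})} \leq C\,\delta^{1/2}\|\psi_0\|_{H^2(M,\mathbb{C})},
\end{equation*}
for a constant $C$ depending only on $\varphi,V,W_j,u_j,g$, proving the limit on $H^2$.

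Finally, since $H^2(M,\mathbb{C})$ is dense in $L^2(M,\mathbb{C})$ and both $e^{-i\delta\tilde H^\delta}$ and $e^{-iG}$ are unitary, a standard $\varepsilon/3$ argument extends the convergence to every $\psi_0\in L^2(M,\mathbb{C})$. The main obstacle I anticipate is precisely that $R^\delta$ contains the unbounded operators $\nabla_g$ and $\Delta_g$, so the remainder cannot be controlled directly on raw $L^2$ data: the Duhamel identity must first be established on a smoother subspace where $R^\delta$ acts boundedly, and only afterwards transported to $L^2$ by density. Closedness of $M$ plays a double role, ensuring both the global boundedness of $\varphi,\nabla_g\varphi,\Delta_g\varphi$ and $G$, and the self-adjointness of $-\Delta_g$ on $H^2(M,\mathbb{C})$ without any boundary conditions.
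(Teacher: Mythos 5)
Your proposal is correct, and it reaches the conclusion by a genuinely different route from the paper. The common starting point is the explicit conjugation formula for $e^{-i\delta^{-1/2}\varphi}(-\Delta_g)e^{i\delta^{-1/2}\varphi}$ (your decomposition $\delta\tilde H^\delta=G+R^\delta$ is exactly the content of the paper's Lemma \ref{lem:limit}, and your $R^\delta$ matches). From there the paper argues softly: strong convergence $\delta L_\delta\psi\to L\psi$ on the common domain $H^2(M,\mathbb{C})$ implies strong resolvent convergence \cite[Theorem VIII.25(a)]{rs1}, and Trotter's theorem \cite[Theorem VIII.21]{rs1} then upgrades this to convergence of the unitary groups on all of $L^2(M,\mathbb{C})$, with no rate. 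You instead compare the two groups directly via the Duhamel identity, exploiting that the limit generator $G$ is a bounded smooth multiplication operator whose exponential preserves $H^2$ uniformly in $s$; this yields the quantitative bound $O(\delta^{1/2})\|\psi_0\|_{H^2}$, which the paper's argument does not provide, and you then pass to general $L^2$ data by density and unitarity exactly as one would. The one point to state carefully in a full write-up is the differentiability of $s\mapsto e^{-is\delta\tilde H^\delta}e^{-i(1-s)G}\psi_0$: this requires $e^{-i(1-s)G}\psi_0\in H^2(M,\mathbb{C})$ with continuous dependence in the graph norm of $\tilde H^\delta$, which holds precisely because $G$ is smooth and $M$ is compact, as you note. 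Both arguments also rely on identifying $e^{-i\delta\tilde H^\delta}$ with $e^{-i\delta^{-1/2}\varphi}e^{-i\delta H^\delta}e^{i\delta^{-1/2}\varphi}$; you invoke the standard unitary-conjugation identity, while the paper verifies it by hand through weak solutions. In short: your proof trades the Trotter--Kato machinery for a self-contained commutator estimate and gains a convergence rate on smooth data; the paper's proof is shorter and avoids the uniform $H^2$ bounds.
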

Exactly as it is done in \cite{duca-nersesyan} in the case of $\mathbb{T}^d$, the limit given in Theorem \ref{thm:limit} can be applied in an iterative way to describe a small-time controllability property on $M$ (see Theorem \ref{thm:generalmanifold}). 

In the case of the two-dimensional sphere $S^2$ with trigonometric potential of interactions, we obtain the following result.
\begin{theorem}\label{thm:molecule}
Let $\psi_0\in L^2(S^2,\mathbb{C})$. Then, system \eqref{eq:S2} satisfies
$$\{e^{i \phi}\psi_0\mid \phi\in L^2(S^2,\mathbb{R})\}\subset \overline{\rm Reach}_{\rm st}(\psi_0).$$
\end{theorem}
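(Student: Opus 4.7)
The plan is to combine an iterated form of Theorem~\ref{thm:limit} (namely, the iterated controllability statement of Theorem~\ref{thm:generalmanifold}) with a density argument specific to $S^2$, and to finish with a continuity step extending smooth phases to $L^2$ phases. Call $\varphi\in C^\infty(S^2,\mathbb{R})$ \emph{universally realizable} if $e^{i\varphi}\psi\in\overline{\rm Reach}_{\rm st}(\psi)$ for every $\psi\in\mathcal{S}$. By the fast-control limit recalled in the introduction, every element of $\Phi_0:={\rm span}_{\mathbb{R}}\{x,y,z\}$ is universally realizable. Defining inductively $\Phi_{k+1}$ as the additive submonoid of $C^\infty(S^2,\mathbb{R})$ generated by $\Phi_k$ together with $\{g(\nabla_{S^2}\varphi,\nabla_{S^2}\varphi):\varphi\in\Phi_k\}$, Theorem~\ref{thm:limit}, applied iteratively as in Theorem~\ref{thm:generalmanifold}, implies that every element of $\Phi_\infty:=\bigcup_k\Phi_k$ is universally realizable, so the problem reduces to showing that $\Phi_\infty$ is dense in $L^2(S^2,\mathbb{R})$.

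For the density, I would compute gradients on $S^2$ via the embedding $S^2\hookrightarrow\mathbb{R}^3$. Using $\nabla_{S^2}f=\nabla_{\mathbb{R}^3}f-(\vec r\cdot\nabla_{\mathbb{R}^3}f)\,\vec r$ with $\vec r=(x,y,z)$, one obtains $g(\nabla_{S^2}x^i,\nabla_{S^2}x^j)=\delta_{ij}-x^ix^j$, and hence $|\nabla_{S^2}(a_1x+a_2y+a_3z)|^2=(a_1^2+a_2^2+a_3^2)-(a_1x+a_2y+a_3z)^2$. Thus $\Phi_1$ already contains every nonnegative sum of such expressions together with any linear combination of $x,y,z$. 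Iterating the map $\varphi\mapsto|\nabla_{S^2}\varphi|^2$ produces polynomials of higher and higher degree; a degree induction, combined with the spherical identity $x^2+y^2+z^2=1$ (which lets one ``trade'' $x^2$ for $1-y^2-z^2$) and with the unrestricted sign of the $\sum u_jW_j$ contribution coming from Theorem~\ref{thm:limit}, shows that $\Phi_\infty$ contains the full unital algebra $\mathbb{R}[x,y,z]|_{S^2}$ of polynomial functions on $S^2$. Stone--Weierstrass then yields density in $C(S^2,\mathbb{R})$, hence in $L^2(S^2,\mathbb{R})$.

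Finally, to pass from $\Phi_\infty$ to $L^2(S^2,\mathbb{R})$, given $\phi\in L^2(S^2,\mathbb{R})$ pick $\varphi_n\in\Phi_\infty$ with $\varphi_n\to\phi$ in $L^2$ and, along a subsequence, pointwise a.e. Since $|e^{i\varphi_n(q)}-e^{i\phi(q)}|\leq 2$ and $|\psi_0|^2\in L^1(S^2)$, dominated convergence gives $e^{i\varphi_n}\psi_0\to e^{i\phi}\psi_0$ in $L^2(S^2,\mathbb{C})$; as each $e^{i\varphi_n}\psi_0$ belongs to the $L^2$-closed set $\overline{\rm Reach}_{\rm st}(\psi_0)$, the theorem follows. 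The hardest part is the density step: because $\varphi\mapsto|\nabla_{S^2}\varphi|^2$ only produces nonnegative quantities, $\Phi_\infty$ is a priori only a convex cone in $C^\infty(S^2,\mathbb{R})$, and upgrading it to the full polynomial algebra rests delicately on combining the sphere identity, the unrestricted sign of the $W_j$-contributions, and iteration to higher and higher degrees.
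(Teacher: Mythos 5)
Your overall architecture --- iterate Theorem \ref{thm:limit} to build a saturation set of realizable phases, show that set is dense in $L^2(S^2,\mathbb{R})$, then conclude by dominated convergence and closedness of $\overline{\rm Reach}_{\rm st}(\psi_0)$ --- is the paper's, and your first and last steps are sound. The gap is in the density step, and for the object you actually construct it is not just ``delicate'' but false. Your $\Phi_{k+1}$ is the additive monoid generated by $\Phi_k$ and the functions $g(\nabla_{S^2}\varphi,\nabla_{S^2}\varphi)$, which are pointwise nonnegative; by induction every element of $\Phi_\infty$ therefore has the form $\ell+f$ with $\ell\in{\rm span}\{x,y,z\}$ and $f\geq 0$. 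Since $\int_{S^2}\ell\,\omega_{S^2}=0$ and $\int_{S^2}f\,\omega_{S^2}\geq 0$, and since $L^2$-convergence on the finite measure space $S^2$ implies convergence of the means, the $L^2$-closure of $\Phi_\infty$ lies in the closed half-space $\{h\in L^2(S^2,\mathbb{R}):\int_{S^2}h\,\omega_{S^2}\geq 0\}$. So, for instance, the constants $-c$ with $c>0$, or $-x^2$, are not approximable, and neither the identity $x^2+y^2+z^2=1$ nor the free sign of the (purely linear, mean-zero) $\sum_j u_jW_j$ contribution, nor iteration to higher degree, can repair this: none of these operations leaves the half-space.

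The missing ingredient is that the saturation procedure must deliver \emph{both} signs of $g(\nabla_g\varphi,\nabla_g\varphi)$. This is precisely why the paper defines $\mathcal{H}_n$ as a vector space of combinations $\varphi_0+\sum_j\alpha_j\, g(\nabla_g\varphi_j,\nabla_g\varphi_j)$ with $\alpha_j\in\mathbb{R}$ of arbitrary sign, the negative sign being addressed in the proof of Theorem \ref{thm:generalmanifold} (the remark on replacing $\delta$ by $-\delta$ in the limit of Theorem \ref{thm:limit}); this point needs justification at the level of reachable sets and cannot be skipped. Once subtraction is available, the polarization identity
$g\bigl(\nabla_g(\varphi+\chi),\nabla_g(\varphi+\chi)\bigr)-g\bigl(\nabla_g(\varphi-\chi),\nabla_g(\varphi-\chi)\bigr)=4\,g(\nabla_g\varphi,\nabla_g\chi)$
becomes usable, and it is exactly how Lemma \ref{lem:saturation} produces the signed monomials: e.g. $|\nabla_{S^2}(x-z)|^2-|\nabla_{S^2}(x+z)|^2=4xz$ and $|\nabla_{S^2}(z^k-z^m)|^2-|\nabla_{S^2}(z^k+z^m)|^2=4km(z^{k+m}-z^{k+m-2})$, from which an induction on the degree yields $\mathbb{P}_n\subset\mathcal{H}_n$ and hence density. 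To complete your argument you must therefore (i) replace the monoid by the vector space $\mathcal{H}_n$ and justify the negative coefficients, and (ii) actually carry out the polarization/degree induction rather than asserting that it works.
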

As a corollary of Theorem \ref{thm:molecule} we obtain the small-time approximate controllability among particular eigenstates of the Laplace-Beltrami operator of $S^2$.
\begin{corollary}\label{cor:eigenfunctions}
Let $Y^j_m, j\in\mathbb{N},m=-j,\dots,j$, 
be the spherical harmonics, 
which are the eigenfunctions of $\Delta_{S^2}$. Then, system \eqref{eq:S2} satisfies
$$(-1)^jY^j_{\pm j}\in \overline{\rm Reach}_{\rm st}\left(Y^j_{\mp j}\right), \forall j\in\mathbb{N}.$$
\end{corollary}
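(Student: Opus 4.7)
The plan is to obtain Corollary \ref{cor:eigenfunctions} as a direct consequence of Theorem \ref{thm:molecule}. Since Theorem \ref{thm:molecule} already places every $e^{i\phi}\psi_0$ with $\phi\in L^2(S^2,\mathbb{R})$ into $\overline{\rm Reach}_{\rm st}(\psi_0)$, the whole task is to exhibit, for each $j\in\mathbb{N}$, a real-valued $L^2$-function $\phi$ such that multiplication by $e^{i\phi}$ sends $Y^j_{\mp j}$ onto $(-1)^j Y^j_{\pm j}$.

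First, I would write the two extremal spherical harmonics explicitly in the coordinates $(\alpha,\beta)$. Using $P^j_j(\cos\beta)\propto\sin^j\beta$, one has, for any standard normalization,
$$Y^j_j(\alpha,\beta)=a_j\sin^j\beta\,e^{ij\alpha},\qquad Y^j_{-j}(\alpha,\beta)=b_j\sin^j\beta\,e^{-ij\alpha},$$
with real constants $a_j,b_j$ satisfying $a_j/b_j=(-1)^j$ (this is where the sign in the statement of the corollary originates; for $j=0$ both sides reduce to the constant $Y^0_0$). Dividing these two expressions yields the pointwise identities
$$(-1)^j Y^j_{\pm j}=e^{\pm 2ij\alpha}\,Y^j_{\mp j}.$$

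Second, I would check that $\phi_\pm(\alpha,\beta):=\pm 2j\alpha$, where $\alpha\in[0,2\pi)$ parametrizes $S^2$ off the two poles, is an admissible phase function for Theorem \ref{thm:molecule}. Even though $\phi_\pm$ has a jump on the meridian $\alpha=0$, it takes values in the bounded interval $[-4\pi j,4\pi j]$, so it defines an element of $L^\infty(S^2,\mathbb{R})\subset L^2(S^2,\mathbb{R})$, which is all Theorem \ref{thm:molecule} requires. Correspondingly, the unitary multiplier $e^{i\phi_\pm}=e^{\pm 2ij\alpha}$ is smooth off the poles and unambiguously defined on the whole sphere.

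Finally, Theorem \ref{thm:molecule} applied to $\psi_0=Y^j_{\mp j}$ with the phase $\phi=\phi_\pm$ gives $(-1)^j Y^j_{\pm j}=e^{i\phi_\pm}Y^j_{\mp j}\in\overline{\rm Reach}_{\rm st}(Y^j_{\mp j})$, which is exactly the claim. All the substantive analytic content lies in Theorems \ref{thm:limit} and \ref{thm:molecule}, so no real obstacle remains at the level of the corollary; the only point requiring a moment of care is verifying that the bounded multivalued angular function $\pm 2j\alpha$ can legitimately be used as an $L^2$ phase on $S^2$, which is immediate from $L^\infty(S^2)\subset L^2(S^2)$.
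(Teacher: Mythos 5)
Your proposal is correct and follows essentially the same route as the paper: both reduce the corollary to Theorem \ref{thm:molecule} via the explicit form $Y^j_{\pm j}\propto(\mp1)^j\sin^j(\beta)e^{\pm ij\alpha}$, which gives $(-1)^jY^j_{\pm j}=e^{\pm 2ij\alpha}Y^j_{\mp j}$, and then use the bounded (hence $L^2$) discontinuous phase $\pm 2j\alpha$. The paper phrases this as approximating $\pm 2j\alpha$ by polynomials in $L^2(S^2,\mathbb{R})$, but that approximation is already built into Theorem \ref{thm:molecule}, so your direct appeal to $L^\infty(S^2)\subset L^2(S^2)$ is the same argument.
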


\subsection{Structure of the paper}
In Section \ref{sec:proof-of-limit} we give a proof of Theorem \ref{thm:limit}, which is then applied in Section \ref{sec:saturation} to describe a small-time approximate controllability property for general manifolds. In Section \ref{sec:example} we develop this property on the 2-dimensional sphere, proving Theorem \ref{thm:molecule} and Corollary \ref{cor:eigenfunctions}. We conclude with an Appendix where we give an algebraic interpretation of Theorem \ref{thm:limit}.
\section{Proof of Theorem \ref{thm:limit}}\label{sec:proof-of-limit}
We start by defining for $\delta>0, t\in\mathbb{R}$
\begin{align*}
L_{\delta}&=e^{-i\delta^{-1/2}\varphi}\left(\!\!-\Delta_g+V+\sum_{j=1}^m\frac{u_j}{\delta}W_j\!\!\right)e^{i\delta^{-1/2}\varphi},\\
L&=g(\nabla_g \varphi,\nabla_g \varphi)+ \sum_{j=1}^mu_jW_j,
\end{align*}
as self-adjoint operators on $L^2(M,\mathbb{C})$ with common domain $H^2(M,\mathbb{C})$ (where $L$ is a multiplicative operator). We have the following.
\begin{lemma}\label{lem:limit}
Let $M$ be a smooth closed manifold equipped with a Riemannian metric $g$. Let $V\in L^\infty(M,\mathbb{R})$, $W_j\in C^\infty(M,\mathbb{R})$, $j=1,\dots,m$. Then, for any $(u_1,\dots,u_m)\in\mathbb{R}^m$, $\psi_0\in H^2(M,\mathbb{C})$ and $\varphi\in C^\infty(M,\mathbb{R})$ we have $\delta L_\delta\psi_0\to L\psi_0$ in $L^2(M,\mathbb{C})$ as $\delta \to 0$.
\end{lemma}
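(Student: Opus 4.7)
The plan is to reduce the lemma to an explicit pointwise computation of the conjugated operator $L_\delta$ acting on $\psi_0$, since conjugation by the unit modulus phase factor $e^{i\delta^{-1/2}\varphi}$ passes through any multiplicative operator trivially. Hence $V$ and $\sum_j (u_j/\delta) W_j$ survive conjugation unchanged, and the only non-trivial piece is the kinetic term $e^{-i\delta^{-1/2}\varphi}(-\Delta_g)(e^{i\delta^{-1/2}\varphi}\psi_0)$.

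To compute this piece I would use the Leibniz rule for the Laplace--Beltrami operator: for smooth $f$ and $\psi_0 \in H^2(M,\mathbb{C})$,
\begin{equation*}
\Delta_g(f\psi_0) = (\Delta_g f)\psi_0 + 2\, g(\nabla_g f, \nabla_g \psi_0) + f\, \Delta_g \psi_0.
\end{equation*}
Setting $f = e^{i\delta^{-1/2}\varphi}$, the chain rule gives $\nabla_g f = i\delta^{-1/2} f \nabla_g \varphi$ and
\begin{equation*}
\Delta_g f = \bigl(i\delta^{-1/2} \Delta_g \varphi - \delta^{-1} g(\nabla_g \varphi,\nabla_g \varphi)\bigr) f.
\end{equation*}
Multiplying the resulting identity by $e^{-i\delta^{-1/2}\varphi}$ and flipping signs yields
\begin{equation*}
e^{-i\delta^{-1/2}\varphi}(-\Delta_g)(e^{i\delta^{-1/2}\varphi}\psi_0)
= \delta^{-1} g(\nabla_g\varphi,\nabla_g\varphi)\psi_0
- i\delta^{-1/2}\bigl[(\Delta_g\varphi)\psi_0 + 2\, g(\nabla_g\varphi,\nabla_g\psi_0)\bigr]
- \Delta_g\psi_0.
\end{equation*}

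Assembling $L_\delta\psi_0$ and multiplying by $\delta$, one obtains three terms of distinct order: at order $\delta^0$ the sum $g(\nabla_g\varphi,\nabla_g\varphi)\psi_0 + \sum_j u_j W_j \psi_0 = L\psi_0$; at order $\delta^{1/2}$ the term $-i\delta^{1/2}[(\Delta_g\varphi)\psi_0 + 2\, g(\nabla_g\varphi,\nabla_g\psi_0)]$; and at order $\delta$ the term $\delta(-\Delta_g\psi_0 + V\psi_0)$. Both lower-order contributions tend to $0$ in $L^2(M,\mathbb{C})$ as $\delta \to 0$: the former because $\varphi \in C^\infty(M)$ and $M$ is compact, hence $\Delta_g\varphi$ and $\nabla_g\varphi$ are bounded, while $\nabla_g\psi_0 \in L^2$ since $\psi_0 \in H^2 \subset H^1$; the latter because $V \in L^\infty$ and $\Delta_g\psi_0 \in L^2$ by hypothesis.

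There is no real obstacle here beyond careful bookkeeping of the orders in $\delta^{1/2}$ and verifying that each remainder actually lies in $L^2$. The essential point is that the $H^2$ regularity of $\psi_0$ is exactly what is needed to make the pointwise identity for $\Delta_g(f\psi_0)$ a genuine identity in $L^2$, and to guarantee that the $\delta$-order remainder $\delta(-\Delta_g\psi_0 + V\psi_0)$ has vanishing $L^2$ norm as $\delta\to 0$. Once the identity is established, the convergence is immediate.
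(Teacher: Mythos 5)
Your proposal is correct and follows essentially the same route as the paper: conjugate the multiplicative terms trivially, expand the conjugated Laplacian via the Leibniz rule and chain rule for $\Delta_g$, and observe that after multiplying by $\delta$ the $\delta^{1/2}$- and $\delta$-order remainders vanish in $L^2$ thanks to the smoothness of $\varphi$, the compactness of $M$, and $\psi_0\in H^2(M,\mathbb{C})$. Nothing essential is missing.
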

\begin{proof}
We compute
\begin{align*}
&e^{-i\delta^{-1/2}\varphi}\delta\Delta_ge^{i\delta^{-1/2}\varphi}\psi_0\\
=&i\delta^{1/2}(\Delta_g\varphi)\psi_0- g(\nabla_g \varphi,\nabla_g\varphi)\psi_0+\delta\Delta_g\psi_0+2i\delta^{1/2}g(\nabla_g \varphi,\nabla_g\psi_0),
\end{align*}
where we used that 
\begin{align*}
&\Delta_g(fh)=\Delta_gf+\Delta_gh+2g(\nabla_gf,\nabla_gh),\quad \Delta_gf=({\rm div}_{\omega_g}\circ\nabla_g)f,\\
& \nabla_g(e^f)=e^f\nabla_g(f), \quad {\rm div}_{\omega_g}(f \nabla_gh)=f \Delta_gh+g(\nabla_g h,\nabla_g f),
\end{align*}
for any functions $f,h\in H^2(M,\mathbb{C})$. 
The conclusion follows by letting $\delta\to 0$ thanks to the regularity of $\varphi,\psi_0$ and the compactness of $M$.
\end{proof}
The previous Lemma \ref{lem:limit} proves that the family of self-adjoint operators $\{\delta L_{\delta}\}_{\delta>0}$ with common domain $H^2(M,\mathbb{C})$ converges to $L$ strongly as $\delta\to 0.$
Hence, from \cite[Theorem VIII.25(a)]{rs1}, we also see that $\delta L_{\delta}\to L$ in the strong resolvent sense as $\delta\to 0.$ Applying Trotter's Theorem \cite[Theorem VIII.21]{rs1}, we conclude that $e^{-i\delta L_{\delta}}\psi_0\to e^{-iL}\psi_0$ in $L^2(M,\mathbb{C})$ as $\delta\to 0$ for any $\psi_0\in L^2(M,\mathbb{C})$. Let $\psi_0\in L^2(M,\mathbb{C})$, and define for $\delta>0$ and for any $t\in\mathbb{R}$,
\begin{align*}
\psi(t)&=e^{-itL_{\delta}}\psi_0,\\
\Psi(t)&=e^{i \delta^{-1/2}\varphi}\psi(t).
\end{align*}
Then, $\psi$ weakly solves
$$i \frac{d}{dt}\psi(t)=L_\delta\psi(t),\quad \psi(0)=\psi_0,$$
so, $\Psi$ weakly solves 
\begin{align*}
i \frac{d}{dt}\Psi(t)\!&=\!\left(\!\!-\Delta_g+V+\sum_{j=1}^m\frac{u_j}{\delta}W_j\!\!\right)\!\!\Psi(t),\\\Psi(0)&=e^{i\delta^{-1/2}\varphi}\psi_0.
\end{align*}
Then, necessarily 
\begin{align*}
\Psi(t)=\mathcal{R}_t\left(e^{i\delta^{-1/2}\varphi}\psi_0,\sum_{j=1}^m\frac{u_j}{\delta}\right)=\exp\!\!\left(\!\!-it\left(-\Delta+V+\sum_{j=1}^m\frac{u_j}{\delta}W_j\right)\!\!\right)\!\!e^{i\delta^{-1/2}\varphi}\psi_0,
\end{align*}
which implies
\begin{align*}
\psi(t)=e^{-i\delta^{-1/2}\varphi}\!\exp\!\!\left(\!\!-it\!\!\left(\!\!-\Delta_g+V+\sum_{j=1}^m\frac{u_j}{\delta}W_j\!\!\right)\!\!\right)e^{i\delta^{-1/2}\varphi}\psi_0,
\end{align*}
and concludes the proof of Theorem \ref{thm:limit}. 


\section{Small-time control in saturation spaces}\label{sec:saturation}
Following \cite{duca-nersesyan}, we associate with \eqref{eq:schro} a non-decreasing sequence of vector spaces. Let 
$$\mathcal{H}_1:={\rm span}_{\mathbb{R}}\{W_1,\dots,W_m\},$$
and for any $n\in\mathbb{N}, n>1$ define $\mathcal{H}_n$ as the largest real vector space whose elements can be written as
$$\varphi_0+\sum_{j=1}^N\alpha_jg(\nabla_g \varphi_j,\nabla_g \varphi_j), \quad\varphi_i \in \mathcal{H}_{n-1},\,\alpha_i\in\mathbb{R}\,\,\forall i=0,\dots,N,\quad N\in\mathbb{N}.$$
Consider the \emph{saturation} space $\mathcal{H}_\infty:=\bigcup_{n=1}^\infty \mathcal{H}_n$. We have the following.
\begin{theorem}\label{thm:generalmanifold}
Let $\psi_0\in L^2(M,\mathbb{C})$. Then, system \eqref{eq:schro} satisfies
$$\{e^{i\phi}\psi_0\mid \phi\in\mathcal{H}_\infty\}\subset \overline{\rm Reach}_{\rm st}(\psi_0).$$
\end{theorem}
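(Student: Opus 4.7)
The plan is to induct on the level $n$, establishing at each stage that $e^{i\phi}\psi_0\in\overline{\rm Reach}_{\rm st}(\psi_0)$ for every $\phi\in\mathcal{H}_n$ and every $\psi_0\in L^2(M,\mathbb{C})$. The base case $n=1$ is already implicit in the introduction: each $\phi=\sum_j u_j W_j\in\mathcal{H}_1$ is realized by the elementary limit $\exp\bigl(-i\delta(-\Delta_g+V+\sum_j (u_j/\delta)W_j)\bigr)\psi_0\to \exp(-i\sum_j u_j W_j)\psi_0$ as $\delta\to 0^+$, and both signs of $\phi$ are captured by flipping the $u_j$.

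For the inductive step, I first record a concatenation property of the small-time reachable set: if small-time controls approximately realize $\psi_0\mapsto\chi_1$ and, uniformly in the intermediate state, $\chi_1\mapsto\chi_2$, their time-concatenation approximately realizes $\psi_0\mapsto\chi_2$ in small total time, by strong continuity of the unitary propagator in the initial datum. Since multiplicative operators commute, this lets me reduce the reachability of $\phi=\varphi_0+\sum_{j=1}^N\alpha_j g(\nabla_g\varphi_j,\nabla_g\varphi_j)\in\mathcal{H}_n$ (with $\varphi_i\in\mathcal{H}_{n-1}$, $\alpha_j\in\mathbb{R}$) to reaching $e^{i\varphi_0}\psi_0$ (immediate by the induction hypothesis) together with each $e^{i\alpha_j g(\nabla_g\varphi_j,\nabla_g\varphi_j)}\psi_0$ separately.

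To reach a single generator $e^{i\alpha g(\nabla_g\varphi,\nabla_g\varphi)}\psi_0$, I invoke Theorem \ref{thm:limit} with zero controls and conjugating function $\sqrt{|\alpha|}\varphi\in\mathcal{H}_{n-1}$. Its three factors are realized physically as follows: (i) a small-time approximation of $\psi_0\mapsto e^{i\delta^{-1/2}\sqrt{|\alpha|}\varphi}\psi_0$, available by the induction hypothesis applied to $\delta^{-1/2}\sqrt{|\alpha|}\varphi\in\mathcal{H}_{n-1}$; (ii) a single run of the Schr\"odinger flow for the small time $\delta$ with zero controls; (iii) a small-time approximation of the inverse conjugation $e^{-i\delta^{-1/2}\sqrt{|\alpha|}\varphi}(\cdot)$, again by the induction hypothesis. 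Theorem \ref{thm:limit} then shows that as $\delta\to 0^+$ the composed output converges in $L^2$ to $e^{-i|\alpha|g(\nabla_g\varphi,\nabla_g\varphi)}\psi_0$, which coincides with $e^{i\alpha g(\nabla_g\varphi,\nabla_g\varphi)}\psi_0$ in the case $\alpha\le 0$.

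There are two delicate points. First, the sign of $\alpha$: the direct construction produces only exponents with non-positive coefficient on $g(\nabla_g\varphi,\nabla_g\varphi)$, whereas $\mathcal{H}_n$ is a full real vector space. For a positive coefficient I anticipate rewriting $\alpha g(\nabla_g\varphi,\nabla_g\varphi)=\beta-[\beta-\alpha g(\nabla_g\varphi,\nabla_g\varphi)]$ with $\beta\in\mathcal{H}_{n-1}$ chosen, via a secondary downward induction on the hierarchy, so that the bracket can be expressed as a non-negatively-weighted combination of generators treatable by the direct construction, while $e^{i\beta}(\cdot)$ is handled by the induction hypothesis. Second, the conjugating phase $\delta^{-1/2}\sqrt{|\alpha|}\varphi$ blows up as $\delta\to 0$, so the induction-hypothesis approximations at steps (i) and (iii) must be coupled to $\delta$ through a diagonal extraction: the free parameters $(T,u)$ in the definition of $\overline{\rm Reach}_{\rm st}$ at those steps have to be chosen to vanish with $\delta$ while the associated $L^2$ error tends to zero, so that the overall strategy still fits the small-time approximate reachability definition. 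This uniform/diagonal bookkeeping, together with the sign management, is where the main work of the proof lies.
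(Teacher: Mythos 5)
Your overall architecture coincides with the paper's: induction on $n$, base case from Theorem \ref{thm:limit} with $\varphi=0$, and inductive step obtained by running the limit of Theorem \ref{thm:limit} with $u=0$ and conjugating phase $\sqrt{|\alpha_j|}\varphi_j\in\mathcal{H}_{n-1}$, realizing the two conjugation factors $e^{\pm i\delta^{-1/2}\sqrt{|\alpha_j|}\varphi_j}$ by the inductive hypothesis (their exponents stay in the vector space $\mathcal{H}_{n-1}$ for every $\delta$), the middle factor as a genuine run of \eqref{eq:schro}, and then concatenating over $j$ using unitarity of the propagator. Your two ``delicate points'' are correctly identified, and the second one (the diagonal choice of the approximation parameters as functions of $\delta$, legitimate because the transitivity of $\overline{\rm Reach}_{\rm st}$ is uniform by isometry of the flow) is exactly the bookkeeping the paper leaves implicit.

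The genuine gap is your treatment of the sign of $\alpha_j$. The direct construction only yields $e^{-i|\alpha|g(\nabla_g\varphi,\nabla_g\varphi)}\psi_0$, and your proposed remedy --- writing $\alpha\, g(\nabla_g\varphi,\nabla_g\varphi)=\beta-\bigl[\beta-\alpha\, g(\nabla_g\varphi,\nabla_g\varphi)\bigr]$ with the bracket a \emph{non-negatively} weighted combination of generators --- has no reason to be feasible on a general closed manifold with general smooth $W_j$. Since each generator $g(\nabla_g\eta,\nabla_g\eta)$ is a non-negative function, your requirement forces a pointwise inequality and, worse, an exact algebraic identity expressing $-g(\nabla_g\varphi,\nabla_g\varphi)$ (modulo lower levels of the hierarchy) as a sum of squared gradient norms; this is a strong saturation property that holds in special situations (e.g.\ trigonometric potentials on $\mathbb{T}^d$, via $\sin^2+\cos^2=1$) but is not available in the generality of Theorem \ref{thm:generalmanifold}, and proving it would amount to a separate, unstated lemma. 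The paper instead obtains the opposite sign essentially for free, by replacing $\delta$ with $-\delta$ in the limit of Theorem \ref{thm:limit}: the operator convergence of Lemma \ref{lem:limit} is two-sided in $\delta$, and evolving for time $-\delta$ while keeping the real conjugating phase of size $|\delta|^{-1/2}$ flips the sign of the limiting multiplicative phase $g(\nabla_g\varphi,\nabla_g\varphi)$. Note that this sign flexibility is not a luxury: the application to $S^2$ in Lemma \ref{lem:saturation} generates the mixed monomials precisely from \emph{differences} of squared gradients, so without both signs the saturation argument collapses. You should replace your ``secondary downward induction'' by this time-reversal device (and then address, as the paper implicitly does, how the backward middle factor is accounted for within forward-in-time reachability).
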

The proof of Theorem \ref{thm:generalmanifold} is analogous to the proof of Theorem 2.2 in \cite{duca-nersesyan}. We sketch it here for completeness.
\begin{proof}[Sketch of the proof of Theorem \ref{thm:generalmanifold}]
It suffices to prove by induction on $n\in\mathbb{N}$ that for any $\psi\in L^2(M,\mathbb{C})$ one has
\begin{equation}\label{eq:saturation}
\left\{e^{i \phi}\psi_0\mid \phi\in\mathcal{H}_n\right\}\subset  \overline{\rm Reach}_{\rm st}(\psi_0).
\end{equation}
One does it by iteratively applying the limit of conjugated trajectories given in Theorem \ref{thm:limit}.
As basis of induction we compute the limit of Theorem \ref{thm:limit} with $\varphi=0$: this proves that a control law $(u_1,\dots,u_m)/\delta$ steers the system \eqref{eq:schro} from $\psi_0$ arbitrarily close to $\exp\left(-i \sum_{j=1}^mu_jW_j\right)\psi_0$ if the time $t=\delta$ is small enough; this means that for any $\psi_0\in L^2(M,\mathbb{C})$
$$\left\{e^{i \phi}\psi_0\mid \phi\in\mathcal{H}_1\right\}\subset  \overline{\rm Reach}_{\rm st}(\psi_0).$$
The idea is that we can now apply the limit of Theorem \ref{thm:limit} with $\varphi\in\mathcal{H}_1$: the limit is a composition of three exponentials that approximates a trajectory of \eqref{eq:schro} and at the same time approximates the state $\exp\left(-ig(\nabla_g\varphi,\nabla_g\varphi)\right)\psi_0$ if the time $t=\delta$ is small enough, where now $g(\nabla_g\varphi,\nabla_g\varphi)$ belongs to the larger vector space of directions $\mathcal{H}_2$. Notice that we are allowed to iterate this procedure because the potentials $W_j$ are smooth.

More precisely, assume that \eqref{eq:saturation} holds for $n'<n$ and let $\varphi_0+\sum_{j=1}^N\alpha_j g(\nabla_g \varphi_j,\nabla_g \varphi_j)\in\mathcal{H}_n$ where $\varphi_i\in\mathcal{H}_{n-1}$ for all $i=0,\dots,N$ and $\alpha_j\in\mathbb{R}$. If $\alpha_1\geq 0$, consider the limit of Theorem \ref{thm:limit} with $\varphi=-\alpha^{1/2}\varphi_1$, $u=0$, and initial condition $\exp(i\varphi_0)\psi_0$ (notice that it is possible to consider such an initial condition because of the inductive hypothesis). The application of the limit, together with the fact that by inductive hypothesis there exists a trajectory of \eqref{eq:schro} arbitrarily close (as the time $t=\delta$ gets smaller) to the composition of the three exponentials given in the limit, one has that for any 
$\psi_0\in L^2(M,\mathbb{C})$
\begin{equation}\label{eq:inductivestep}
\exp\left(i\varphi_0+i|\alpha_1|g(\nabla_g \varphi_1,\nabla_g \varphi_1)\right)\psi_0\in  \overline{\rm Reach}_{\rm st}(\psi_0).
\end{equation}
If $\alpha_1<0$, one only needs to replace $\delta$ with $\widetilde{\delta}=-\delta$ in the limit of Theorem \ref{thm:limit}, obtaining $-|\alpha_1|$ instead of $|\alpha_1|$ in \eqref{eq:inductivestep}. By iterating this argument (that is, by considering the limit of Theorem \ref{thm:limit} with initial condition the LHS of \eqref{eq:inductivestep}, $\varphi=-\alpha_2^{1/2}\varphi_2$, and $u=0$ and so on) one obtains 
$$\exp\left(i\varphi_0+i\sum_{j=1}^N\alpha_jg(\nabla_g \varphi_j,\nabla_g \varphi_j)\right)\psi_0\in  \overline{\rm Reach}_{\rm st}(\psi_0). $$
\end{proof}
\section{Example: the 2-dimensional sphere}\label{sec:example}
In this section we show how to obtain Theorem \ref{thm:molecule} and Corollary \ref{cor:eigenfunctions}. In particular, we prove that the saturation space $\mathcal{H}_\infty$ associated with the potentials of interaction
$$W_1=x,\quad W_2=y,\quad W_3=z, $$
seen as polynomials on $S^2=\{(x,y,z)\in\mathbb{R}^3, x^2+y^2+z^2=1\}$, is dense in $L^2(S^2,\mathbb{R})$. For any $n\in\mathbb{N}$, let $\mathbb{P}_n$ be the vector space of real polynomials $p:S^2\subset\mathbb{R}^3\to \mathbb{R}$ of degree less or equal than $n$. 
We have the following.
\begin{lemma}\label{lem:saturation}
For any $n\geq 2$, $\mathbb{P}_n\subset\mathcal{H}_n.$
\end{lemma}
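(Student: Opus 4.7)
The plan is to induct on $n$. A preliminary observation is that since $\mathcal{H}_n$ contains every $g(\nabla_g\varphi,\nabla_g\varphi)$ with $\varphi\in\mathcal{H}_{n-1}$, the polarization identity $2g(\nabla_g\varphi,\nabla_g\psi)=g(\nabla_g(\varphi+\psi),\nabla_g(\varphi+\psi))-g(\nabla_g\varphi,\nabla_g\varphi)-g(\nabla_g\psi,\nabla_g\psi)$ already puts $g(\nabla_g\varphi,\nabla_g\psi)$ into $\mathcal{H}_n$ for all $\varphi,\psi\in\mathcal{H}_{n-1}$. A second ingredient is the extrinsic formula for the induced metric: for polynomials $\tilde f,\tilde h\in\mathbb{R}[x,y,z]$ restricted to $S^2$, since the outward unit normal is $(x,y,z)$, one has
\[
g(\nabla_g\tilde f,\nabla_g\tilde h)=\nabla_{\mathbb{R}^3}\tilde f\cdot\nabla_{\mathbb{R}^3}\tilde h-(x\tilde f_x+y\tilde f_y+z\tilde f_z)(x\tilde h_x+y\tilde h_y+z\tilde h_z).
\]
In particular $g(\nabla_g x,\nabla_g x)=1-x^2$ and $g(\nabla_g x,\nabla_g y)=-xy$, with analogous formulas for the remaining coordinate pairs.

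For the base case $n=2$, I would observe that $\tfrac12\bigl[g(\nabla_g x,\nabla_g x)+g(\nabla_g y,\nabla_g y)+g(\nabla_g z,\nabla_g z)\bigr]=\tfrac12(3-x^2-y^2-z^2)=1$ on $S^2$, hence $1\in\mathcal{H}_2$; then $x^2=1-g(\nabla_g x,\nabla_g x)\in\mathcal{H}_2$ and similarly $y^2,z^2\in\mathcal{H}_2$, while $xy=-g(\nabla_g x,\nabla_g y)\in\mathcal{H}_2$ and analogously for the other mixed quadratic monomials. Combined with $\mathcal{H}_1\subset\mathcal{H}_2$, this yields $\mathbb{P}_2\subset\mathcal{H}_2$.

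For the inductive step with $n\geq 3$, assume $\mathbb{P}_{n-1}\subset\mathcal{H}_{n-1}$. Any monomial of degree $n$ can, after a relabeling of variables, be written as $x^{a+1}y^bz^c$ with $a+b+c=n-1$ and $a\geq 0$. Take $\varphi:=x^ay^bz^c\in\mathbb{P}_{n-1}\subset\mathcal{H}_{n-1}$ and $\psi:=x\in\mathcal{H}_1\subset\mathcal{H}_{n-1}$; since $\varphi$ is homogeneous of degree $n-1$, Euler's identity gives $x\varphi_x+y\varphi_y+z\varphi_z=(n-1)\varphi$, whence
\[
g(\nabla_g\varphi,\nabla_g\psi)=ax^{a-1}y^bz^c-(n-1)x^{a+1}y^bz^c.
\]
The first summand lies in $\mathbb{P}_{n-2}\subset\mathcal{H}_n$ by the induction hypothesis, while the left-hand side belongs to $\mathcal{H}_n$ by the polarization remark above; hence $x^{a+1}y^bz^c\in\mathcal{H}_n$. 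The symmetric choices $\psi=y$ and $\psi=z$ recover any remaining degree-$n$ monomial, and together with $\mathbb{P}_{n-1}\subset\mathcal{H}_n$ this gives $\mathbb{P}_n\subset\mathcal{H}_n$.

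The main (mild) obstacle is the bookkeeping of degrees: one must verify that the top-degree piece $x^{a+1}y^bz^c$ truly separates from the residue $ax^{a-1}y^bz^c$ generated by the extrinsic formula, which it does because the residue has degree $n-2$ and is absorbed by the induction hypothesis. The nonzero prefactor $n-1$, which would degenerate in the base case, is precisely why $n=2$ is treated by hand; there, the defining identity $x^2+y^2+z^2=1$ of $S^2$ is essential, as it produces the constant function $1\in\mathcal{H}_2$ which is then needed to recover the individual squares $x^2,y^2,z^2$.
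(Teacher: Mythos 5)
Your proof is correct, and while the overall strategy coincides with the paper's (induction on the degree, the extrinsic formula $\nabla_{S^2}f=\nabla_{\mathbb{R}^3}f-(\nu\cdot\nabla_{\mathbb{R}^3}f)\nu$ with $\nu=(x,y,z)$, and polarization of $\varphi\mapsto g(\nabla_g\varphi,\nabla_g\varphi)$ — the paper performs the polarization implicitly through the differences $\langle\nabla_{S^2}(f-h),\nabla_{S^2}(f-h)\rangle-\langle\nabla_{S^2}(f+h),\nabla_{S^2}(f+h)\rangle=-4\langle\nabla_{S^2}f,\nabla_{S^2}h\rangle$), your inductive step is organized differently and more economically. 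The paper treats a degree-$n$ monomial $x^ky^lz^m$ by a case analysis: first $z^n$ (and $x^n$, $y^n$) from the pair $(z^k,z^m)$, then the two-variable monomials from $(x^k,y^lz^m)$ with $l=0$ or $m=0$, and finally the general case, each requiring a separate componentwise computation of the tangential gradients. You instead polarize every degree-$n$ monomial against a single coordinate function, writing it as $x^{a+1}y^bz^c$ and computing $g(\nabla_g(x^ay^bz^c),\nabla_g x)=ax^{a-1}y^bz^c-(n-1)x^{a+1}y^bz^c$ via Euler's identity for homogeneous polynomials; the lower-degree residue is absorbed by the induction hypothesis and the nonvanishing factor $n-1$ isolates the target monomial. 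This single uniform computation replaces the paper's three, at the modest cost of making the polarization identity and Euler's identity explicit; your base case is the same as the paper's (the identity $x^2+y^2+z^2=1$ producing $1\in\mathcal{H}_2$, then the squares and the mixed products). The degree bookkeeping you flag is handled correctly, and the containments $\mathcal{H}_1\subset\mathcal{H}_{n-1}$ and $\mathbb{P}_{n-2}\subset\mathcal{H}_n$ that you use follow from the monotonicity of the spaces $\mathcal{H}_n$, so there is no gap.
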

By density of polynomials in $L^2(S^2,\mathbb{R})$, Theorem \ref{thm:generalmanifold} and Lemma \ref{lem:saturation} imply Theorem \ref{thm:molecule}. Moreover, by noticing that
$$Y^j_{\pm j}(\alpha,\beta)=\frac{(\mp 1)^j}{2^j j!}\sqrt{\frac{(2j+1)!}{4\pi}}\sin^j(\beta)e^{\pm i j\alpha},$$
Corollary \ref{cor:eigenfunctions} is then a straightforward consequence of Theorem \ref{thm:molecule}: it suffices to approximate the (discontinuous) functions $\pm 2j \alpha$ in $L^2(S^2,\mathbb{R})$ with polynomials. We are thus left to prove Lemma \ref{lem:saturation}.
\begin{proof}[Proof of Lemma \ref{lem:saturation}]
We prove the statement by induction. By definition, we have
$$\mathcal{H}_1:={\rm span}\{x,y,z\}.$$
To prove the basis of induction, it suffices to prove that the monomials 
$$1,x^2,y^2,z^2,xy,xz,yz$$
belong to $\mathcal{H}_2$. We recall that, since the Riemannian metric $g$ on $S^2$ is the pull-back metric induced by the inclusion $S^2 \hookrightarrow\mathbb{R}^3$, for any smooth function on the sphere $\varphi=\varphi(x,y,z)$ the Riemannian gradient $\nabla_g\varphi$ is equal to the vector field $\nabla_{S^2}\varphi$ in $\mathbb{R}^3$ tangent to the sphere given by
$$\nabla_{S^2}\varphi=\sum_{i=1}^3 \left(\nabla_{S^2}\varphi\right)_i \frac{\partial}{\partial x_i} ,\quad  \left(\nabla_{S^2}\varphi\right)_i=\sum_{j=1}^3\frac{\partial \varphi}{\partial x_j}(\delta_{ij}-x_ix_j),$$
where $x_1=x,x_2=y,x_3=z$, and the Riemannian norm of $\nabla_{S^2}\varphi$ can thus be computed as a scalar product in $\mathbb{R}^3$, i.e.,
$$g(\nabla_{g}\varphi,\nabla_{g}\varphi)=\langle\nabla_{S^2}\varphi,\nabla_{S^2}\varphi \rangle=\sum_{i=1}^3 \left(\nabla_{S^2}\varphi\right)_i^2.$$
 Hence, we compute for $i,j\in\{1,2,3\}$
$$
\left(\nabla_{S^2}x_j\right)_i=\begin{cases}
-x_ix_j,&i\neq j\\
1-x_j^2,&i=j
\end{cases}
$$
For $j=3$ we get
$$\langle\nabla_{S^2}z ,\nabla_{S^2}z \rangle=z^2(x^2+y^2)+(1-z^2)^2=z^2(1-z^2)+(1-z^2)^2=1-z^2,$$
where we used that $x^2+y^2=1-z^2$ on the sphere. Analogously we obtain
$$ \langle\nabla_{S^2}x_j ,\nabla_{S^2}x_j \rangle=1-x_j^2,\quad j=1,2,3. $$
We take the sum and use that $x^2+y^2+z^2=1$ on the sphere, obtaining
$$ \sum_{j=1}^3 \langle\nabla_{S^2}x_j ,\nabla_{S^2}x_j \rangle=2,$$
from which we see that $1,x^2,y^2,z^2$ are in $\mathcal{H}_2$. Then, we also compute
$$
\left(\nabla_{S^2}(x\pm z)\right)_i=\begin{cases}
1-x^2\mp xz,&i=1\\
-xy\mp yz,&i=2\\
-xz\pm (1-z^2),&i=3
\end{cases}
$$
from which we get
\begin{align*}
&\langle\nabla_{S^2}(x-z) ,\nabla_{S^2}(x-z) \rangle-\langle\nabla_{S^2}(x+z) ,\nabla_{S^2}(x+z) \rangle\\
=& (1-x^2+xz)^2+(-xy+yz)^2+(-xz-1+z^2)^2\\
-&(1-x^2-xz)^2-(-xy-yz)^2-(-xz+1-z^2)^2\\
=&8xz-4x^3z-4xy^2z-4xz^3=4xz,
\end{align*}
where we used that $y^2=1-x^2-z^2$ on the sphere. This implies that $xz\in\mathcal{H}_2$. Since everything is symmetric in $(x,y,z)$, the same argument can of course be repeated with $y$ instead of $z$, obtaining that $xy\in \mathcal{H}_2$, and $y$ instead of $x$, obtaining that $yz\in \mathcal{H}_2$. This proves the basis of induction.

We now show that if the statement holds for all $n'<n$, then it holds for $n$. Notice that thanks to the inductive hypothesis, in order to prove the statement it suffices to show that the monomials 
$$x^ky^lz^m,\quad  (k,l,m)\in\mathbb{N}^3,\quad k+l+m=n,$$
are in $\mathcal{H}_n$. We thus compute for $k,m\neq 0$
$$
\left(\nabla_{S^2}(z^k\pm z^m)\right)_i=\begin{cases}
-kz^kx\mp mz^mx,&i=1\\
-kz^ky\mp mz^my,&i=2\\
kz^{k-1}(1-z^2)\pm mz^{m-1}(1-z^2),&i=3
\end{cases}
$$
which gives
$$\langle\nabla_{S^2}(z^k- z^m) ,\nabla_{S^2}(z^k- z^m) \rangle-\langle\nabla_{S^2}(z^k+ z^m) ,\nabla_{S^2}(z^k+ z^m) \rangle=4kmz^{k+m}-4kmz^{k+m-2}.$$
By choosing $k+m=n$, since $z^k,z^m,z^{k+m-2}\in \mathcal{H}_{n-1}$ by inductive hypothesis, we obtain that $z^{n}\in \mathcal{H}_n$. The same argument can of course be repeated with $x$ or $y$ instead of $z$, obtaining that $x^{n},y^n\in \mathcal{H}_n$. We then compute
$$
\left(\nabla_{S^2}(x^k\pm y^lz^m)\right)_i=\begin{cases}
kx^{k-1}(1-x^2)\mp mxy^lz^m\mp lxy^lz^m,&i=1\\
-kx^ky\mp my^{l+1}z^m\pm ly^{l-1}(1-y^2)z^m,&i=2\\
-kx^kz\pm my^lz^{m-1}(1-z^2)\mp ly^lz^{m+1},&i=3
\end{cases}
$$
which gives
$$\langle\nabla_{S^2}(x^k- y^lz^m) ,\nabla_{S^2}(x^k- y^lz^m) \rangle-\langle\nabla_{S^2}(x^k+ y^lz^m) ,\nabla_{S^2}(x^k+ y^lz^m) \rangle=4k(l+m)x^ky^lz^{m}.$$
By choosing $m=0,k\neq 0, k+l=n$, or $l=0,k\neq 0, k+m=n$, since $x^k,y^lz^m\in\mathcal{H}_{n-1}$ by inductive hypothesis, we obtain that $x^ky^l\in \mathcal{H}_n$ or that $x^kz^m\in \mathcal{H}_n$. By exchanging the roles of $x$ and $y$, the same argument can of course be repeated, obtaining that $y^kz^m\in \mathcal{H}_n$. Finally, by choosing $k,m,n\neq 0,k+m+l=n$, we obtain that $x^ky^lz^m\in\mathcal{H}_n$, which concludes the proof.
\end{proof}
We conclude this section by noticing that, since 
$$-\Delta_{S^2}Y^j_m=j(j+1)Y^j_m,\quad\forall j\in\mathbb{N},\,\, m=-j,\dots,j, $$
the small-time transfer between $(-1)^jY^j_{\pm j}$ and $Y^j_{\mp j}$ obtained in Corollary \ref{cor:eigenfunctions} happens between two eigenfunctions that correspond to the same degenerate eigenvalue $j(j+1)$.
\section{Conclusion}
We proved that it is in principle possible to obtain a transfer of population in arbitrarily small times among particular eigenstates of the physically relevant system of a rotating rigid molecule. Extensions of small-time controllability among more general states in such systems is an open challenge. \\
The modelling of controlled quantum systems via perturbation of a stationary Schr\"odinger equation is valid as long as the external field varies sufficiently slowly and its amplitude is small enough. The results of this paper should thus be interpreted as the fact that, for these particular eigenstates transfers, there is no theoretical lower bound on the time. The actual limitation on the minimal time needed to obtain the transfer is then due to the validity of the model w.r.t. the size of the control.
\section*{Acknowledgments}
      The authors thank Alessandro Duca for several discussions and Herschel Rabitz for useful remarks.\\
This work is part of the project CONSTAT, supported by the Conseil 
Régional de Bourgogne Franche-Comté and the European Union through the PO FEDER 
Bourgogne 2014/2020 programs, by the French ANR through the grant QUACO (ANR-17-CE40-0007-01) and by EIPHI Graduate School (ANR-17-EURE-0002).   

\appendix
\section{An heuristic in terms of Lie brackets}
Here we interpret Theorem \ref{thm:limit} in an algebraic way. Let us rewrite \eqref{eq:schro} in abstract terms as
\begin{equation}\label{eq:schro-ab}
i \frac{d}{dt}\psi(t)=\left(H_0+\sum_{j=1}^m u_j(t)H_j\right)\psi(t),\,\,\psi(0)=\psi_0
\end{equation}
where $\psi$ belongs to some infinite-dimensional Hilbert space $\mathcal{H}$.
\begin{theorem}\label{thm:abstract}
Let $H_0$ be an unbounded self-adjoint operator with domain $\mathcal{D}(H_0)$, and $H_1,\dots,H_m$ be bounded self-adjoint operators. Let $S$ be a bounded self-adjoint operator satisfying 
\begin{align}
&[S,H_j]=0, \quad j=1,\dots,m,\label{rel:a}\\
&S\mathcal{D}(H_0)\subset \mathcal{D}(H_0),\quad {\rm ad}^3_S(H_0)\mathcal{D}(H_0)=0.\label{rel:b}
\end{align}
Then, for any $\psi_0\in\mathcal{H}$ the following limit holds in $\mathcal{H}$
\begin{align}\label{limit:abstract}
\lim_{\delta \to 0}&e^{-i\delta^{-1/2}S}\exp\left(-i\delta\left(H_0+\sum_{j=1}^m\frac{u_j}{\delta}H_j\right)\right)e^{i\delta^{-1/2}S}\psi_0\nonumber \\
=&\exp\left(\frac{i}{2} {\rm ad}^2_S(H_0)-i\sum_{j=1}^m u_jH_j\right)\psi_0,
\end{align}
where ${\rm ad}^0_AB=B$, ${\rm ad}_AB=[A,B]=AB-BA$ and ${\rm ad}^{n+1}_AB=[A,{\rm ad}^{n}_AB]$. 
\end{theorem}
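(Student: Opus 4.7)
The plan is to mirror the proof of Theorem \ref{thm:limit} given in Section \ref{sec:proof-of-limit}, replacing the concrete differential-geometric computation performed there by an abstract Hadamard expansion whose finiteness is guaranteed by \eqref{rel:b}. First I would introduce the conjugated operator
\begin{equation*}
L_\delta := e^{-i\delta^{-1/2}S}\left(H_0+\sum_{j=1}^m\frac{u_j}{\delta}H_j\right)e^{i\delta^{-1/2}S}
\end{equation*}
and the candidate limit $L := -\tfrac{1}{2}\mathrm{ad}^2_S(H_0)+\sum_{j=1}^m u_j H_j$, viewed as self-adjoint operators with common domain $\mathcal{D}(H_0)$. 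Because $S$ commutes with each $H_j$ by \eqref{rel:a}, the conjugation leaves the potentials untouched, and $L_\delta$ reduces on $\mathcal{D}(H_0)$ to $e^{-i\delta^{-1/2}S}H_0 e^{i\delta^{-1/2}S}+\sum_{j=1}^m \frac{u_j}{\delta}H_j$.

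The heart of the argument is the analog of Lemma \ref{lem:limit}: I would show that $\delta L_\delta\psi_0 \to L\psi_0$ in $\mathcal{H}$ as $\delta\to 0$, for every $\psi_0\in\mathcal{D}(H_0)$. For this I would differentiate $A(t)\psi_0 := e^{-itS}H_0 e^{itS}\psi_0$ in $t\in\mathbb{R}$, obtaining $\frac{d}{dt}A(t)\psi_0 = -i[S,A(t)]\psi_0$, and iterate. The invariance $S\mathcal{D}(H_0)\subset\mathcal{D}(H_0)$ from \eqref{rel:b} makes each of $[S,H_0]\psi_0$ and $\mathrm{ad}^2_S(H_0)\psi_0$ a well-defined element of $\mathcal{H}$, while the vanishing of $\mathrm{ad}^3_S(H_0)$ on $\mathcal{D}(H_0)$ forces the iteration to terminate, producing the closed-form Hadamard identity
\begin{equation*}
e^{-itS}H_0 e^{itS}\psi_0 = H_0\psi_0 - it[S,H_0]\psi_0 - \tfrac{t^2}{2}\mathrm{ad}^2_S(H_0)\psi_0.
\end{equation*}
Specialising to $t=\delta^{-1/2}$ and multiplying by $\delta$ gives
\begin{equation*}
\delta L_\delta\psi_0 = \delta H_0\psi_0 - i\delta^{1/2}[S,H_0]\psi_0 - \tfrac{1}{2}\mathrm{ad}^2_S(H_0)\psi_0 + \sum_{j=1}^m u_j H_j\psi_0,
\end{equation*}
whose first two summands vanish as $\delta\to 0$, leaving exactly $L\psi_0$.

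From here the proof proceeds verbatim as in Section \ref{sec:proof-of-limit}: the strong convergence of the self-adjoint family $\{\delta L_\delta\}_{\delta>0}$ on the common core $\mathcal{D}(H_0)$ upgrades to strong resolvent convergence, Trotter's theorem then yields $e^{-i\delta L_\delta}\psi_0 \to e^{-iL}\psi_0$ in $\mathcal{H}$ for every $\psi_0\in\mathcal{H}$, and the conjugation identity
\begin{equation*}
e^{-i\delta L_\delta} = e^{-i\delta^{-1/2}S}\exp\!\left(\!-i\delta\Big(H_0+\sum_{j=1}^m \tfrac{u_j}{\delta}H_j\Big)\!\right)\!e^{i\delta^{-1/2}S}
\end{equation*}
follows from the same Duhamel calculation used at the end of Section \ref{sec:proof-of-limit}. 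Rewriting $-iL = \tfrac{i}{2}\mathrm{ad}^2_S(H_0)-i\sum_j u_j H_j$ then matches the right-hand side of \eqref{limit:abstract}. The main obstacle I anticipate is the rigorous justification of the finite Hadamard expansion for an unbounded $H_0$: this is precisely what the two conditions in \eqref{rel:b} are engineered to enable, and the consistency with Theorem \ref{thm:limit}, recovered by taking $S$ to be multiplication by $\varphi$, $H_0=-\Delta_g+V$ and $H_j=W_j$, is witnessed by the identity $\mathrm{ad}^2_\varphi(-\Delta_g) = -2g(\nabla_g\varphi,\nabla_g\varphi)$ already implicit in the proof of Lemma \ref{lem:limit}.
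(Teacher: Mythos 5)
Your proposal is correct and follows essentially the same route as the paper: the same reduction to the strong convergence $\delta L_\delta\psi_0\to L\psi_0$ on $\mathcal{D}(H_0)$, followed by strong resolvent convergence, Trotter's theorem, and the conjugation identity. The only (harmless) variation is that you obtain the finite Hadamard expansion by differentiating $e^{-itS}H_0e^{itS}\psi_0$ in $t$ and using ${\rm ad}^3_S(H_0)\mathcal{D}(H_0)=0$ to terminate the Taylor series, whereas the paper invokes the Baker--Campbell--Hausdorff series directly and truncates it with the same commutator relations.
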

In the case of a quantum particle on a Riemannian manifold (see Theorem \ref{thm:limit}), where $H_0=-\Delta_g+V$ and $S=\varphi\in C^\infty(M,\mathbb{R})$, we have that ${\rm ad}^3_S(H_0)\psi_0=0$ for any $\psi_0\in H^2(M,\mathbb{C})$ and
$$ \frac{1}{2}{\rm ad}^2_S(H_0)\psi_0=-g(\nabla_g \varphi,\nabla_g \varphi)\psi_0,\quad \forall \psi_0\in H^2(M,\mathbb{C}).$$
When the Hilbert space $\mathcal{H}$ is finite-dimensional, it is easy to check that the bracket relation ${\rm ad}^3_S(H_0)=0$ implies ${\rm ad}^2_S(H_0)=0$ and $[S,H_0]=0$ on $\mathcal{D}(H_0)$ (so the limit \eqref{limit:abstract} does not furnish any additional direction). Interestingly, as we have just noticed, this is not the case when $\mathcal{H}$ is infinite-dimensional. This is related to the fact that $\varphi\in C^\infty(M,\mathbb{R})$ (seen as a multiplication operator in $L^2(M,\mathbb{C})$) has continuous spectrum (if it is not a constant). 
\begin{proof}[Proof of Theorem \ref{thm:abstract}.]
In order to prove \eqref{limit:abstract}, it suffices to prove the analogous of the limit given in Lemma \ref{lem:limit}, i.e.,
\begin{align}\label{limit:abstract2}
\lim_{\delta \to 0}e^{-i\delta^{-1/2}S}\delta\left(H_0+\sum_{j=1}^m\frac{u_j}{\delta}H_j\right)e^{i\delta^{-1/2}S}\psi_0
=\left(-\frac{1}{2} {\rm ad}^2_S(H_0)+ \sum_{j=1}^mu_jH_j\right)\psi_0,
\end{align}
for any $\psi_0\in\mathcal{D}(H_0)$, and then repeat the same steps as we did in Section \ref{sec:proof-of-limit} (using the self-adjointness of $H_0,H_j$ and $S$). Since $S$ is bounded, we can use the Baker-Campbell-Hausdorff formula and write
 \begin{align*}
& e^{-i\delta^{-1/2}S}\delta\left(H_0+\sum_{j=1}^m\frac{u_j}{\delta}H_j\right)e^{i\delta^{-1/2}S}\psi_0\\
 =&\sum_{k=0}^\infty\frac{1}{k!}{\rm ad}^k_{(-i\delta^{-1/2}S)}\left(\delta\left(H_0+\sum_{j=1}^m\frac{u_j}{\delta}H_j\right)\right)\psi_0\\
 =&\sum_{k=0}^2\frac{(-i)^{k}\delta^{-k/2+1}}{k!}{\rm ad}^k_S(H_0)\psi_0+\sum_{j=1}^mu_jH_j\psi_0
 \end{align*}
where we used the commutator relations \eqref{rel:a} and \eqref{rel:b} (and the fact that \eqref{rel:b} implies \newline${\rm ad}^k_S(H_0)\mathcal{D}(H_0)=0$ for all $k\geq 3$) in the second equality. The proof of \eqref{limit:abstract2} is concluded, and the proof of Theorem \ref{thm:abstract} follows.
\end{proof}

\bibliographystyle{spmpsci}
\bibliography{references}

 \end{document}